\newcommand{\uu}[1]{2\uparrow \uparrow #1}
\newtheorem{thm}{Theorem}
\newtheorem{lem}{Lemma}
\newtheorem{cor}{Corollary}
\theoremstyle{definition}
\newtheorem{df}{Definition}
\title{Further improving of upper bound on a geometric Ramsey problem}
\author{Eryk Lipka \\ Zaremba Society of Mathematicians – Students of the Jagiellonian University \\ Institute of Mathematics of the Pedagogical University of Cracow \\
\texttt{eryklipka0@gmail.com}}
\date{May 2019}
\begin{document}
\maketitle
\begin{abstract}
    We consider following geometric Ramsey problem: find the least dimension $n$ such that for any 2-coloring of edges of complete graph on the points $\{\pm 1\}^n$ there exists 4-vertex coplanar monochromatic clique. Problem was first analyzed by Graham and Rothschild \cite{GR} and they gave an upper bound: $n\le F(F(F(F(F(F(F(12)))))))$, where $F(m) = 2\uparrow^m3$. In 2014 Lavrov, Lee and Mackey \cite{ML} greatly improved this result by giving upper bound $n< 2\uparrow\uparrow\uparrow 6 < F(5)$. In this paper we revisit their estimates and reduce upper bound to $n< 2\uparrow\uparrow\uparrow 5$.
    
\end{abstract}

\section{Setting}

\begin{df}
Given $n,c,d\in \mathbb Z_+ $ let \textbf{Hales-Jewett number} $\text{HJ}\left(n,c,d\right)$ be the least integer $k$ with the following property.
For any $c$-coloring $D$ of $\left\{0,1,\ldots, n-1\right\}^k$ there exists an injective function $\rho:\left\{0,1,\ldots, n-1\right\}^d \rightarrow\left\{0,1,\ldots, n-1\right\}^k$ such, that
\begin{equation*}
\forall_{1\le i\le k}\left[\exists_{1\le j \le d}\,\rho_i\left(y_1,\ldots,y_d\right) = y_d  \vee \exists_{0\le j\le n-1}\,\rho_i\left(y_1,\ldots,y_d\right) = j\right],
\end{equation*}
and $\rho\left(\left\{0,1,\ldots, n-1\right\}^d\right)$ is $D$-monochromatic.
\end{df}

\begin{df}
Given $n,c,d\in \mathbb Z_+ $ let \textbf{Tic-Tac-Toe number} $\text{TTT}\left(n,c,d\right)$ be the least integer $k$ with the following property.
For any $c$-coloring $D$ of $\left\{0,1,\ldots, n-1\right\}^k$ there exists an injective function $\rho:\left\{0,1,\ldots, n-1\right\}^d \rightarrow\left\{0,1,\ldots, n-1\right\}^k$ such, that
\begin{equation} \label{eq:propttt}
\forall_{1\le i\le k}\left[\exists_{1\le j \le d}\,\rho_i\left(y_1,\ldots,y_d\right) = y_d \vee\exists_{1\le j \le d}\,\rho_i\left(y_1,\ldots,y_d\right) = n-1-y_d  \vee \exists_{0\le j\le n-1}\,\rho_i\left(y_1,\ldots,y_d\right) = j\right],
\end{equation}
and $\rho\left(\left\{0,1,\ldots, n-1\right\}^d\right)$ is $D$-monochromatic. Image of such a function is called a \textbf{$d$-dimensional Tic-Tac-Toe Subspace}.
\end{df}

\begin{df}
Given $d \in \mathbb Z_+$ let $\text{Graham}\left(d\right)$ be the smallest dimension $k$ such that for every edge-coloring of a complete graph on the points $\{\pm 1\}^k$ there exists an injective function $\rho:\left\{\pm 1\right\}^d \rightarrow\left\{\pm 1\right\}^k$ with 
$$\forall_{1\le i\le k}\left[\exists_{1\le j \le d}\,\rho_i\left(y_1,\ldots,y_d\right) = y_d \vee\exists_{1\le j \le d}\,\rho_i\left(y_1,\ldots,y_d\right) = -y_d  \vee \exists_{0\le j\le n-1}\,\rho_i\left(y_1,\ldots,y_d\right) = j\right],$$
and all edges between the points of $\rho\left(\left\{\pm 1\right\}^d\right)$ have the same color.
\end{df}
In particular, $\text{Graham}\left(2\right)$ is the smallest integer $k$, such that for every edge-coloring of a complete graph on the points $\{\pm 1\}^k$ there exist four coplanar vertices such that all six edges between them are monochromatic. Our goal is to give a better upper bound for that value.
It has been proven in \cite{ML}, that $\text{Graham}\left(2\right)\le \text{TTT}\left(4, 2, 6\right) +1$, and then, using obvious inequality $\text{HJ}\left(n,c,d\right) \ge \text{TTT}\left(n,c,d\right)$ it was shown that $\text{Graham}\left(2\right) < 2\uparrow\uparrow\uparrow 6$. Our approach is to not use the Hales-Jewett function, because TTT$\left(\cdot, c, d\right)$ and HJ$\left(\cdot, c, d\right)$ have similar growth rate, but initial values of TTT$\left(\cdot, c, d\right)$ are much smaller.

\begin{df}
Given $n,c,l\in \mathbb Z_+ $ let $\text{Cub}\left(n,c,l\right)$ be the least integer $k$ with the following property.
For any $c$-coloring $D$ of $X=\left\{0,1,\ldots, n-1\right\}^k$ there exists $c$-coloring $D'$ of $Y=\left\{0,1,\ldots, n-1\right\}^l$ and an injective function $\pi : Y \rightarrow X$ such, that
\begin{equation} \label{eq:property1}
\forall_{1\le i\le k}\left[\exists_{1\le j \le l}\pi_i\left(y_1,\ldots,y_l\right) = y_j  \vee \exists_{0\le j\le n-1}\pi_i\left(y_1,\ldots,y_l\right) = j\right],    
\end{equation}
\begin{equation} \label{eq:property2}
\forall_{y\in Y} D\left(\pi\left(y\right)\right) = D'\left(y\right),    
\end{equation}
\begin{equation} \label{eq:property3}
\forall_{1\le i\le l} D'\left(y_1,y_2,\ldots,y_{i-1}, n-1, y_{i+1},\ldots, y_l\right) = D'\left(y_1,y_2,\ldots,y_{i-1}, n-2, y_{i+1},\ldots, y_l\right).    
\end{equation}
In other words, values $n-2$ and $n-1$ are not distinguished by induced coloring of $Y$. 
\end{df}

\begin{lem}
Let $n,c,l \in \mathbb Z_+$, then $\text{Cub}\left(n,c,l\right) \le l \cdot f\left(l,c^{n^l}\right)$, where
$$f\left(l,k\right) =
\begin{cases}
k^{f\left(l-1,k\right)^{2l-2}}+1 & \text{for } l>1\\
k+1 & \text{for } l = 1
\end{cases}.$$
\end{lem}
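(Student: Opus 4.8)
The plan is to build the required map $\pi$ directly as a ``product of staircase lines'' living in $l$ disjoint coordinate blocks, the block sizes being chosen so as to realize exactly the recursion defining $f$. Write $k:=c^{n^l}$ and $s_i:=f(i,k)$, and split the $l\cdot f(l,k)$ coordinates of $X$ into consecutive blocks $B_1,\dots,B_l$ with $|B_i|=s_i$; since $f(\cdot,k)$ is easily seen to be nondecreasing in its first argument, $\sum_{i=1}^{l}s_i\le l\cdot f(l,k)$, so this many coordinates are available. In block $B_j$ the line for the variable $y_j$ is encoded by a pair of cut-points $0\le a_j<b_j\le s_j$: coordinate $p\in B_j$ takes the constant $n-1$ if $p\le a_j$, the variable $y_j$ if $a_j<p\le b_j$, and the constant $n-2$ if $p>b_j$. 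This already gives the form required in \eqref{eq:property1}, and since each block has a nonempty active range the full map is injective. Setting $y_j=n-1$ and $y_j=n-2$ turns block $B_j$ into the staircases $w_{b_j}$ and $w_{a_j}$, where $w_t$ has $t$ leading $(n-1)$'s followed by $(n-2)$'s; thus insensitivity of the induced coloring in the direction $y_j$ is exactly the requirement that replacing $w_{b_j}$ by $w_{a_j}$ in $B_j$ never changes the $D$-color.

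Next I would fix the cut-pairs block by block, in the order $i=l,l-1,\dots,1$. At step $i$ the blocks $B_{i+1},\dots,B_l$ already carry chosen lines, while $B_1,\dots,B_{i-1}$ are undetermined. I define a super-coloring of the $s_i+1$ cut-positions $\{0,1,\dots,s_i\}$ of $B_i$: the super-color of a position $t$ is the function that, to each choice of cut-pairs for $B_1,\dots,B_{i-1}$ and each assignment of the remaining variables $(y_j)_{j\ne i}$, associates the color $D$ of the point whose block $B_i$ equals $w_t$, whose later blocks use their fixed lines evaluated at $y_j$, and whose earlier blocks use the chosen cut-pairs evaluated at $y_j$. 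A pigeonhole then yields two positions $a_i<b_i$ with the same super-color, which I take as the cut-pair of $B_i$. Equality of super-colors says precisely that switching $y_i$ between $n-1$ and $n-2$ leaves $D$ unchanged for every configuration of the other blocks — both the later ones (already fixed) and the earlier ones (for whatever cuts we later assign) and for all values $(y_j)_{j\ne i}$. Consequently each direction becomes insensitive and these insensitivities survive the subsequent choices, so the product subspace $\pi$ satisfies \eqref{eq:property1}–\eqref{eq:property3} with $D'(y):=D(\pi(y))$.

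It remains to verify that the pigeonhole goes through with $|B_i|=s_i=f(i,k)$, which is the crux of the estimate. The domain of the super-color function at step $i$ has size at most $n^{l-1}\cdot\prod_{j<i}\binom{s_j+1}{2}\le n^{l-1}\cdot f(i-1,k)^{2(i-1)}$, using $s_j\le f(i-1,k)$ for $j\le i-1$; hence the number of super-colors is at most $c^{\,n^{l-1}f(i-1,k)^{2(i-1)}}\le c^{\,n^{l}f(i-1,k)^{2(i-1)}}=k^{f(i-1,k)^{2i-2}}=s_i-1$. As there are $s_i+1$ cut-positions, two must share a super-color, exactly as needed; the base case $l=1$ is this same pigeonhole with no earlier or later blocks, where the $s_1+1=c^{n}+2$ staircases $w_0,\dots,w_{s_1}$ are $c$-colored and two coincide. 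I expect the main obstacle to be organizing the super-coloring so that the insensitivity obtained in direction $i$ is robust simultaneously to the future (still-unfixed earlier blocks), to the past (already-fixed later blocks) and to all variable values, and then checking that the crude count $\prod_{j<i}\binom{s_j+1}{2}\le f(i-1,k)^{2(i-1)}$ of relevant configurations produces exactly the exponent $2i-2$ in the recursion for $f$.
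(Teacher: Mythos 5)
Your proposal is correct and is essentially the paper's approach: the paper's one-line proof simply cites chapter 1 of \cite{SH}, and what you have reconstructed is precisely Shelah's cube-lemma argument from that source — staircase lines in $l$ blocks, a block-by-block pigeonhole on cut-positions whose super-colors quantify over the cut-pairs of the still-unfixed blocks and over all values of the other variables, with the count $n^{l-1}\prod_{j<i}\binom{s_j+1}{2}\le n^l f\left(i-1,k\right)^{2i-2}$ yielding exactly the recursion for $f$ and hence properties (\ref{eq:property1})--(\ref{eq:property3}). Your write-up supplies the details the paper omits, and the accounting matches the stated bound $\text{Cub}\left(n,c,l\right)\le l\cdot f\left(l,c^{n^l}\right)$.
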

\begin{proof}
This is straightforward conclusion from chapter 1 of \cite{SH}. This fact was used to show, that $\text{HJ}\left(n+1, c, d\right)\le \text{Cub}\left(n+1,c,\text{HJ}\left(n,c,d\right)\right)\le \text{HJ}\left(n,c,d\right) \cdot f\left(\text{HJ}\left(n,c,d\right), c^{{n+1}^{\text{HJ}\left(n,c,d\right)}}\right)$.
\end{proof}

\begin{lem}
Let $k,l\in \mathbb Z_+$ and $f$ be defined as above, then $2l < k \Rightarrow f\left(l,k\right) < k \uparrow \uparrow 2l.$
\end{lem}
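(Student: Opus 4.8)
The plan is to prove the statement by induction on $l$, keeping $k$ free but always subject to the running hypothesis $2l<k$. For the base case $l=1$ I would simply check $f(1,k)=k+1<k^k=k\uparrow\uparrow 2$; this holds whenever $k\ge 3$, and the constraint $2\cdot 1<k$ forces exactly $k\ge 3$, so the base case is immediate.

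For the inductive step, fix $l\ge 2$ and assume $2l<k$. Then also $2(l-1)<k$, so the induction hypothesis applies and gives $f(l-1,k)<T$, where I abbreviate $T:=k\uparrow\uparrow(2l-2)$. Unwinding the recursion, $f(l,k)=k^{f(l-1,k)^{2l-2}}+1$, so up to the trailing $+1$ the whole problem reduces to bounding the exponent $f(l-1,k)^{2l-2}$ above by $k\uparrow\uparrow(2l-1)$. Since $f(l-1,k)<T$ and $2l-2>0$, monotonicity reduces this to the cleaner claim $T^{2l-2}<k^T=k\uparrow\uparrow(2l-1)$; informally, I have lost exactly one level of the tower and must now pay for raising $T$ to the power $2l-2$.

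The inequality $T^{2l-2}<k^T$ is where the hypothesis $2l<k$ is actually spent, and I expect this to be the one genuinely delicate point. Taking logarithms base $k$ it is equivalent to $(2l-2)\log_k T<T$, and since $\log_k T=k\uparrow\uparrow(2l-3)=:S$ and $T=k^S$, it becomes $(2l-2)\,S<k^S$. Now $2l-2<k$ gives $(2l-2)S<kS$, while $kS\le k^S$ holds for all $S\ge 1$ and $k\ge 2$; chaining these two facts produces the strict bound. The bookkeeping I would watch most carefully is matching the tower heights $2l-2$, $2l-1$, $2l$ and checking $S\ge 1$, which needs $l\ge 2$ so that $2l-3\ge 1$ and $S=k\uparrow\uparrow(2l-3)$ is a genuine tower.

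Finally I would dispose of the trailing $+1$. Since all quantities involved are integers, the strict bound $f(l-1,k)^{2l-2}<k\uparrow\uparrow(2l-1)$ sharpens to $f(l-1,k)^{2l-2}\le k\uparrow\uparrow(2l-1)-1$, whence $f(l,k)\le k^{k\uparrow\uparrow(2l-1)-1}+1<k^{k\uparrow\uparrow(2l-1)}=k\uparrow\uparrow 2l$; the last inequality holds because $k^{M}-k^{M-1}=k^{M-1}(k-1)>1$ for $M:=k\uparrow\uparrow(2l-1)$ and $k\ge 3$. This closes the induction and gives $f(l,k)<k\uparrow\uparrow 2l$ under $2l<k$.
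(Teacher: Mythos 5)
Your proof is correct and follows essentially the same route as the paper's: induction on $l$ with the identical base case $k+1<k^k$, and an inductive step that spends the hypothesis $2l<k$ to absorb the exponent $f(l-1,k)^{2l-2}$ into one extra level of the tower. The only difference is presentational — the paper chains $k^{(k\uparrow\uparrow(2l-2))^{2l-2}} < k^{(k\uparrow\uparrow(2l-2))^{k}} < k\uparrow\uparrow 2l$ and leaves the logarithmic bookkeeping and the trailing $+1$ implicit, whereas you make both explicit via $(2l-2)S < kS \le k^{S}$ and the integer-sharpening argument.
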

\begin{proof}
For $l=1$ it is obviously true as $k+1 < k^k=k \uparrow \uparrow 2$ for $k>2l=2$. By induction it is true for any $l$ because
$$f\left(l,k\right) < k^{{\left(k\uparrow \uparrow 2l-2\right)}^{2l-2}} < k^{{\left(k\uparrow \uparrow 2l-2\right)}^k} < k\uparrow\uparrow 2l.$$
\end{proof}

\begin{lem}
Let $c,d\in \mathbb Z_+$, then $\text{TTT}\left(2, c, d\right) \le \frac{c}{2}\cdot 3^{d}$.
\end{lem}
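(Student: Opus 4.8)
The plan is to set up a clean induction on $d$, exploiting the fact that for $n=2$ the one–dimensional case is pure pigeonhole and therefore tolerates a large number of colours at only a \emph{logarithmic} cost; this is exactly what keeps the growth geometric rather than tower-like, and is the reason the reflection option ($-y_j$) is so powerful here.

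First I would record the combinatorial meaning of a monochromatic $d$-dimensional TTT subspace in $\{0,1\}^k$ (equivalently $\{\pm1\}^k$). Such a subspace is determined by a partition of the coordinates into a constant block $C_0$ together with $d$ nonempty ``direction'' blocks $C_1,\dots,C_d$ and a sign pattern $s_j$ on each $C_j$; its $2^d$ points arise by letting each block independently take $s_j$ or the reflected pattern $-s_j$ (the reflection being precisely the $-y_j$ option in \eqref{eq:propttt}). In this language the base case $d=1$ asks only for two distinct points of equal colour, so $\text{TTT}(2,c,1)=\lceil\log_2(c+1)\rceil$ by pigeonhole; note this is already well below $\tfrac{3c}{2}$.

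For the inductive step I would prove
\begin{equation*}
\text{TTT}(2,c,d)\le \text{TTT}(2,c,d-1)+\bigl\lceil\log_2\!\bigl(c^{2^{d-1}}+1\bigr)\bigr\rceil .
\end{equation*}
Write $m=\text{TTT}(2,c,d-1)$ and reserve a fresh block $E$ of $e=\lceil\log_2(c^{2^{d-1}}+1)\rceil$ coordinates disjoint from the first $m$. Fixing the coordinates of $E$ at a base value and applying the inductive hypothesis to the restricted colouring on the first $m$ coordinates yields direction blocks $C_1,\dots,C_{d-1}$ and a colour $\gamma$ such that all $2^{d-1}$ resulting points have colour $\gamma$. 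Now regard the effect of the coordinates of $E$ as a colouring of $\{\pm1\}^{E}$ by ``profiles'' in $\{1,\dots,c\}^{2^{d-1}}$, namely the list of colours attained over the $2^{d-1}$ configurations of $C_1,\dots,C_{d-1}$; this uses at most $c^{2^{d-1}}$ colours. Since $e$ meets the $d=1$ threshold for that many colours, pigeonhole produces a nonempty block $W\subseteq E$ whose simultaneous flip preserves every profile. Adjoining $C_d:=W$ then gives a genuine $d$-dimensional TTT subspace (all $d$ directions are used, so $\rho$ is injective) on which all $2^d$ points still have colour $\gamma$.

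Finally I would solve the recursion: since $c^{2^{j-1}}+1\le (c+1)^{2^{j-1}}$, summing gives $\text{TTT}(2,c,d)\le(2^d-1)\lceil\log_2(c+1)\rceil$, which, using $\lceil\log_2(c+1)\rceil\le c$ and $2^{d+1}-2\le 3^d$, lies comfortably below $\tfrac{c}{2}\,3^d$. The step I expect to need the most care is the focusing one: one must check that the profile colouring faithfully records simultaneous colour–preservation across all $2^{d-1}$ lower-dimensional configurations, that the reserved block $E$ stays disjoint from the inductively found directions, and that the block $W$ returned by pigeonhole is nonempty so that the new direction is genuinely present. It is worth remarking that this argument in fact yields a bound stronger than $\tfrac{c}{2}3^d$, so there is ample slack; a cruder accounting charging a fixed factor $3$ per added dimension would already suffice to obtain the stated estimate.
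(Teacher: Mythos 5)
Your base case and the general block-by-block strategy are sound, but the focusing step in your inductive step has a genuine gap, and it is exactly the step you flagged as delicate. After you fix $E$ at the base value and obtain $C_1,\dots,C_{d-1}$ and the colour $\gamma$, the pigeonhole on the $2^e$ settings of $E$ produces two settings $x\neq x'$ whose profiles agree \emph{with each other}; nothing forces this common profile to be the constant-$\gamma$ profile. The all-$\gamma$ profile was attained at the base setting of $E$, and the base setting need not be one of the matched pair. So adjoining $C_d:=W$ yields a $d$-dimensional subspace on which the colour is merely independent of the $d$-th direction (each of the $2^{d-1}$ profile colours appears twice), not a monochromatic one. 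The obvious repair — pigeonhole first, on the full restricted colouring $D(\cdot,x)\in\{1,\dots,c\}^{2^m}$, and only then apply the inductive hypothesis at $E=x$ — needs $e\approx 2^m\log_2 c$, which gives tower-type growth in $d$ and overshoots $\frac{c}{2}3^d$ entirely. Indeed, the fact that your claimed recursion yields $(2^d-1)\lceil\log_2(c+1)\rceil$, \emph{stronger} than the lemma being proved, should have been a warning sign.

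The paper's proof reverses the quantifiers at precisely this point: for each setting $x$ of the new block it reruns the search in the earlier blocks with the new block held at $x$, and pigeonholes on the \emph{output} of that search, namely the pair of points selected inside each earlier block together with the resulting colour. The number of possible outputs is $c\cdot\prod_{j<i}\binom{2^{r_j}}{2}$, so the $i$-th block needs only $r_i=\left\lceil\log_2\left(c\cdot\prod_{j<i}\binom{2^{r_j}}{2}+1\right)\right\rceil$ coordinates; two settings $x\neq x'$ with identical output then extend the structure by one genuine direction \emph{with the same colour}, because the matched data includes $\gamma$. Since $\binom{2^{r_j}}{2}<2^{2r_j}$, this gives $r_i\le 3r_{i-1}$, and summing the geometric series yields $\sum_{j\le d}r_j\le r_1\cdot\frac{3^d-1}{2}\le\frac{c}{2}\cdot 3^d$. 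To salvage your write-up, replace your profile (the colours of the $2^{d-1}$ configurations) by this output profile (found pairs plus colour) and redo the arithmetic; the $2^d$-type bound does not survive the correction, and the factor-$3$ growth per dimension is exactly what the corrected count produces.
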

\begin{proof}
First, we notice $\text{TTT}\left(2, c, 1\right) = \left\lceil \log_2\left(c+1\right)\right\rceil\le c$ as a line connecting any two points in $\left\{0,1\right\}^k$ has property (\ref{eq:propttt}), so we just need to have more points than colors. Define $r_1=\left\lceil \log_2\left(c+1\right)\right\rceil, r_i = \left\lceil \log_2\left(c\cdot \prod_{j<i}  \genfrac{(}{)}{0pt}{}{2^{r_j}}{2}+1\right)\right\rceil$, then by pigeonhole principle $\text{TTT}\left(2, c, d\right) \le \sum_{j\le d} r_j$. Because $r_i \le 3\cdot r_{i-1}$ then $\sum_{j\le d} r_j \le r_1 \cdot \frac{3^d-1}{2}$.
\end{proof}

\begin{cor}
By carefully repeating previous proof we can get even better estimate for certain values, in particular for $c=2$ we have $\left(r_i\right) = \left(2,4,11,32,95,284,\ldots\right)$ so $\text{TTT}\left(2, 2, 6\right) \le 428$.
\end{cor}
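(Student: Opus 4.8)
The plan is to retain everything from the proof of the previous lemma up to the bound
$$\text{TTT}\left(2,c,d\right)\le \sum_{j=1}^{d} r_j,\quad r_1=\left\lceil\log_2\left(c+1\right)\right\rceil,\quad r_i=\left\lceil\log_2\left(c\prod_{j<i}\binom{2^{r_j}}{2}+1\right)\right\rceil,$$
and to discard only the final wasteful step $r_i\le 3r_{i-1}$ that produced the closed form $\frac{c}{2}3^{d}$. For a fixed small value of $c$ the recursion can be evaluated exactly, and that is all the corollary needs: specialise to $c=2$, compute $r_1,\dots,r_6$ honestly, and add them.

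First I would tabulate the terms. Writing $P_i=2\prod_{j<i}\binom{2^{r_j}}{2}$, one gets $r_1=\left\lceil\log_2 3\right\rceil=2$, then $P_2=2\binom{4}{2}=12$ so $r_2=4$, then $P_3=12\binom{16}{2}=1440$ so $r_3=11$, and continuing in the same manner $r_4=32$, $r_5=95$, $r_6=284$, whence $\text{TTT}\left(2,2,6\right)\le 2+4+11+32+95+284=428$. To make the growth transparent I would also expose the linear recurrence hidden in the data: taking base-two logarithms turns the product into a sum, and since $\binom{2^{r_j}}{2}=2^{r_j-1}\left(2^{r_j}-1\right)$ we have $\log_2\binom{2^{r_j}}{2}=2r_j-1-\delta_j$ with $\delta_j=-\log_2\left(1-2^{-r_j}\right)>0$. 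Writing $S_i:=1+\sum_{j\le i}\left(2r_j-1\right)$, this gives $\log_2 P_{i+1}=S_i-\sum_{j\le i}\delta_j$; once the corrections are shown to be small (below) this yields $r_{i+1}=S_i=3r_i-1$ for $i\ge2$, reproducing $11,32,95,284$ from $r_2=4$.

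The one genuine obstacle is controlling the corrections $\delta_j$ tightly enough that every ceiling rounds to the intended integer, since a single off-by-one in any exponent would corrupt the total. The key estimate is that the accumulated error never reaches $1$: the leading terms are $\delta_1=\log_2\tfrac{4}{3}\approx 0.42$ and $\delta_2=\log_2\tfrac{16}{15}\approx 0.09$, while $\delta_j<2^{\,1-r_j}$ controls the rapidly shrinking tail, giving $\sum_{j\le i}\delta_j<1$ for every $i$. Combined with $\sum_{j\le i}\delta_j\ge\delta_1>0$, this places $\log_2 P_{i+1}$ strictly inside $\left(S_i-1,S_i\right)$; since $P_{i+1}$ is an integer we get $P_{i+1}+1\le 2^{S_i}$ as well, and therefore $\left\lceil\log_2\left(P_{i+1}+1\right)\right\rceil=S_i$. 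This pins down $r_{i+1}=S_i$ exactly and makes both the recurrence and the tabulation rigorous; everything else is arithmetic bookkeeping.
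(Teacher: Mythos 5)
Your proposal is correct and follows exactly the route the paper intends: specialise the recursion $r_1=\left\lceil\log_2\left(c+1\right)\right\rceil$, $r_i=\left\lceil\log_2\left(c\prod_{j<i}\binom{2^{r_j}}{2}+1\right)\right\rceil$ from the preceding lemma to $c=2$, evaluate the six terms, and sum them to get $2+4+11+32+95+284=428$. Your additional error analysis --- showing $\sum_j\delta_j<1$ so that every ceiling resolves to $S_i$ and the values obey $r_{i+1}=3r_i-1$ for $i\ge 2$ --- is a rigorous justification of the ``carefully repeating'' that the paper leaves implicit, and all your computed values and estimates check out.
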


\section{Main Result}

\begin{lem} For $n\ge 2$
$$\text{TTT}\left(n+2, c, d\right)\le \text{Cub}\left(n+2,c,\text{Cub}\left(n+1,c,\text{TTT}\left(n, c, d\right)\right)\right).$$
\end{lem}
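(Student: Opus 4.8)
The plan is to run the Shelah-type reduction recorded in the proof of Lemma 1 (there carried out for $\text{HJ}$), but to descend by two letters at once rather than one, which is exactly what the two nested calls to Cub provide. The reason two letters must go together is the reflection $z\mapsto n+1-z$ built into the TTT property: a $d$-dimensional TTT subspace over $n+2$ letters is defined with this involution, so the passage to an $n$-letter TTT problem can only be made through a reduction of the alphabet that commutes with it. Collapsing just the top pair, as a single application of Cub does, destroys the symmetry; I will therefore fuse the top pair $\{n,n+1\}$ and the bottom pair $\{0,1\}$ simultaneously, so that the $n$ surviving letters are $\{0,1\},2,3,\dots,n-1,\{n,n+1\}$, on which the involution still acts.

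First I would apply $\text{Cub}(n+2,c,l_1)$ with $l_1=\text{Cub}(n+1,c,\text{TTT}(n,c,d))$ to the given colouring $D$, obtaining a combinatorial embedding $\pi_1$ and an induced colouring $D_1$ on $\{0,\dots,n+1\}^{l_1}$ that does not distinguish $n$ from $n+1$. To fuse the bottom pair with a Cub that only fuses the top pair I must reflect, and the key point is to reflect with the \emph{same} centre $n+1$ that the target subspace uses, so that the two steps stay compatible. Concretely I would feed $\text{Cub}(n+1,c,l_2)$, with $l_2=\text{TTT}(n,c,d)$, the colouring $u\mapsto D_1\big((n+1)\mathbf 1-u\big)$ on the $(n+1)$-letter slice $u\in\{1,\dots,n+1\}^{l_1}$; fusing its top pair fuses $\{0,1\}$ for $D_1$. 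Undoing the reflection turns the resulting combinatorial embedding into one whose free coordinates read $n+1-w_m$ — genuine centre-$(n+1)$ reflected coordinates — together with constants. Extending its free variables from $\{1,\dots,n+1\}$ to all of $\{0,\dots,n+1\}$, which only ever introduces the coordinate value $n+1$, costs nothing because $D_1$ already fuses $n$ and $n+1$; the extended embedding $\hat\nu$ then has an induced colouring $\hat D_2$ on $\{0,\dots,n+1\}^{l_2}$ that fuses both $\{0,1\}$ and $\{n,n+1\}$. The verification that both fusions survive on the whole cube is the elementary fact underlying the $\text{HJ}$ recursion in the proof of Lemma 1: flipping a free variable moves all coordinates tied to it at once, and $D_1$ is invariant under each such flip.

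With the symmetric fusion secured I would reduce to the $n$ middle letters through the clamp $q:\{0,\dots,n+1\}\to\{1,\dots,n\}$, $q(x)=\min(\max(x,1),n)$, which satisfies $\hat D_2=\hat D_2\circ q$ coordinatewise and, crucially,
\[ q(n+1-z)=n+1-q(z)\qquad\text{for all } z\in\{0,\dots,n+1\}, \]
so that $q$ intertwines the centre-$(n+1)$ involution with itself. Restricting $\hat D_2$ to $\{1,\dots,n\}^{l_2}$ (an $n$-letter cube with involution $w\mapsto n+1-w$) and using $l_2=\text{TTT}(n,c,d)$ yields a monochromatic $n$-symbol TTT subspace $\tau$. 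I would lift $\tau$ to an honest $(n+2)$-symbol TTT map $\sigma$ on $\{0,\dots,n+1\}^d$ by sending each coordinate of $\tau$ of free type to $z_j$, each coordinate of reflected type to $n+1-z_j$, and keeping constants; the intertwining identity together with $\hat D_2=\hat D_2\circ q$ gives $\hat D_2(\sigma(z))=\hat D_2(\tau(q(z)))$, the single colour of $\tau$. Finally $\rho=\pi_1\circ\hat\nu\circ\sigma$ is the desired subspace: a composition of a combinatorial map, a centre-$(n+1)$ reflected-combinatorial map, and a centre-$(n+1)$ TTT map is again a centre-$(n+1)$ TTT map, while property \eqref{eq:property2} of Cub propagates the colour so that $D(\rho(z))=\hat D_2(\sigma(z))$ is constant, giving the stated inequality.

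The step I expect to fight with is the reflection in the second paragraph. A top-fusing Cub forces a reflection in order to reach the bottom pair, and the naive reflection inside $\{0,\dots,n\}$ has centre $n$, which is incompatible with the centre $n+1$ of the target TTT subspace; the remedy is to reflect the full alphabet (centre $n+1$) but hand Cub only the slice $\{1,\dots,n+1\}$, and then to reinstate the suppressed letter through the top fusion. Getting this alignment exactly right — keeping both fusions alive on the entire cube while guaranteeing that $\pi_1$, $\hat\nu$ and $\sigma$ splice into a single legitimate TTT subspace — is the technical heart of the argument.
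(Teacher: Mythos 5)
Your proposal is correct and follows essentially the same route as the paper's proof: an outer $\text{Cub}(n+2,c,\cdot)$ fusing the top pair, an inner $\text{Cub}(n+1,c,\cdot)$ conjugated by an alphabet reflection so that its top-pair fusion lands on the bottom pair $\{0,1\}$, then $\text{TTT}(n,c,d)$ on the middle $n$ letters, and finally an extension of the resulting subspace to the full $(n+2)$-letter alphabet made monochromatic by the two fusions from property (\ref{eq:property3}). The differences are pure bookkeeping: you reflect with centre $n+1$ on the slice $\{1,\dots,n+1\}$ and repair with the clamp $q$, whereas the paper reflects via $\sigma(y)=(n-y_1,n-y_2,\dots)$ inside $\{0,\dots,n\}$ after $\tau$ and fixes the resulting off-by-one with the final shift $\rho'\left(x_1,\ldots,x_d\right)=\rho''\left(x_1+1,\ldots,x_d+1\right)$.
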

\begin{proof}
We will basicaly repeat proof of Lemma 1.4 from \cite{SH}, but with TTT instead of Hales-Jewett function. Define 
\begin{align*}
    X&=\left\{0,1,\ldots, n+1\right\}^{\text{Cub}\left(n+2,c,\text{Cub}\left(n+1,c,\text{TTT}\left(n, c, d\right)\right)\right)},\\
    Y&=\left\{0,1,\ldots, n+1\right\}^{\text{Cub}\left(n+1,c,\text{TTT}\left(n, c, d\right)\right)},\\
    Y'&=\left\{0,1\ldots,n\right\}^{\text{Cub}\left(n+1,c,\text{TTT}\left(n, c, d\right)\right)},\\
    Z&=\left\{0,1,\ldots, n\right\}^{\text{TTT}\left(n, c, d\right)},\\
    Z'&=\left\{0,1,\ldots, n-1\right\}^{\text{TTT}\left(n, c, d\right)}.
\end{align*}

Let $D$ be any $c$-coloring of $X$, by definition of Cub we have induced $c$-coloring $D'$ of $Y$ and embedding $\pi: Y \rightarrow X$ such that properties (\ref{eq:property1}),(\ref{eq:property2}),(\ref{eq:property3}) hold (for $n := n+2$). Again, by definition of Cub we have induced $c$-coloring $D''$ of $Z$ and embedding $\tau: Z \rightarrow Y'$ such that properties (\ref{eq:property1}),(\ref{eq:property2}),(\ref{eq:property3}) hold (for $n:=n+1$). By definition of TTT there exists an injective function $\rho: \left\{0,1,\ldots, n-1\right\}^d \rightarrow Z'$ with property (\ref{eq:propttt}) and its image is $D''$-monochromatic.
Let $\sigma : Y' \rightarrow Y'$ be defined as $\sigma\left(y_1,y_2,\ldots\right) = \left(n-y_1,n-y_2,\ldots\right)$, also let $\gamma: Y'\rightarrow Y, \zeta: Z' \rightarrow Z$ be natural embeddings.

Define $\rho' : \left\{0,1,\ldots, n-1\right\}^d \rightarrow X$ as 
$$\rho'\left(x_1,x_2,\ldots,x_d \right)=\pi\circ\gamma\circ\sigma\circ\tau\circ\zeta\circ\rho\left(x_1,x_2,\ldots,x_d \right).$$
\includegraphics[width=\textwidth]{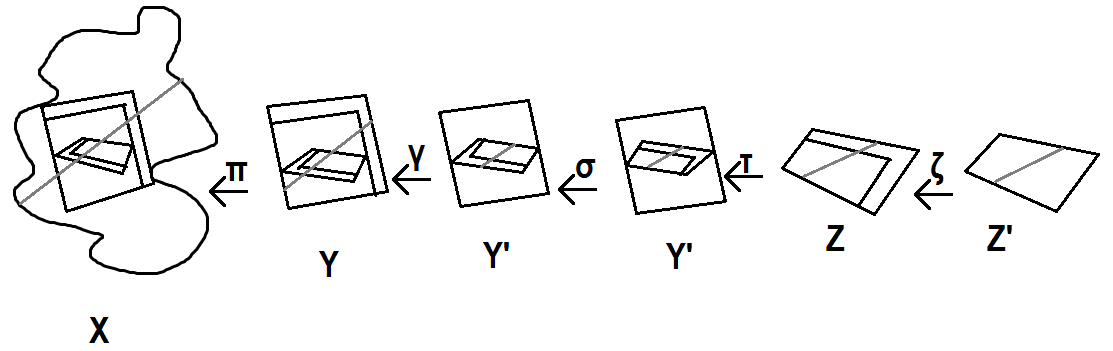}
It is easy to check, that $\rho'\left(\left\{0,1,\ldots, n-1\right\}^d\right) \subset \left\{1,2,\ldots, n\right\}^{\dim X}$ is $D$-monochromatic and
$$\forall_{1\le i\le \dim{X}}\left[\exists_{1\le j \le d}\rho'_i\left(x_1,\ldots,x_d\right) = x_j+1 \vee\exists_{1\le j \le d}\rho'_i\left(x_1,\ldots,x_d\right) = n-x_j  \vee \exists_{1\le j\le n}\rho'_i\left(x_1,\ldots,x_d\right) = j\right].$$
Now, we define function $\rho'':\left\{0,1,\ldots, n,n+1\right\}^d \rightarrow X$ in a following way
$$\rho''_i\left(x_1,\ldots,x_d\right) =
\begin{cases}
x_j & \text{if } \exists_{1\le j \le d}\rho'_i\left(x_1,\ldots,x_d\right) = x_j+1\\
n+1-x_j & \text{if } \exists_{1\le j \le d}\rho'_i\left(x_1,\ldots,x_d\right) = n-x_j\\
j & \text{if } \exists_{1\le j\le n}\rho'_i\left(x_1,\ldots,x_d\right) = j
\end{cases}.$$
This function satisfies $\rho'\left(x_1,\ldots,x_d\right) = \rho''\left(x_1+1,\ldots,x_d+1\right)$, so $\rho'\left(\left\{0,1,\ldots, n-1\right\}^d\right) = \rho''\left(\left\{1,2\ldots, n\right\}^d\right)$, and image of $\rho''$ is a Tic-Tac-Toe subspace. Because $\pi$ and $\tau$ have property (\ref{eq:property3}) this image is also $D$-monochromatic, so $\text{TTT}\left(n+2,c,d\right) \le \dim X.$
\end{proof}

\begin{lem}
$\text{Cub}\left(3,2,\text{TTT}\left(2, 2, 6\right)\right) < \uu{5137} $.
\end{lem}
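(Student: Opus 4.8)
The plan is to chain Lemmas 1 and 2 and then collapse the resulting iterated power back into a single tower of $2$'s. Put $l=\text{TTT}(2,2,6)$; by Corollary 1 we have $l\le 428$. Lemma 1 with $n=3$, $c=2$ gives
$$\text{Cub}(3,2,l)\le l\cdot f\!\left(l,2^{3^{l}}\right).$$
The right-hand side is increasing in $l$ (the prefactor $l$, the base $2^{3^{l}}$, and $f$ in each of its arguments all increase with $l$), so I may substitute the bound $l\le 428$ and reduce to estimating $428\cdot f\!\left(428,2^{3^{428}}\right)$.

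Next I would apply Lemma 2. Its hypothesis $2l<k$ holds with vast room here, since $2\cdot 428=856<2^{3^{428}}$; thus
$$f\!\left(428,2^{3^{428}}\right)<\left(2^{3^{428}}\right)\uparrow\uparrow 856.$$
It then remains to tame the base $K:=2^{3^{428}}$ and the $856$-fold tetration. For the base, $3^{428}<4^{428}=2^{856}<2^{65536}=\uu 5$, whence $K=2^{3^{428}}<2^{\uu 5}=\uu 6$.

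For the tetration I would first record the elementary collapsing inequality: if $k\le\uu p$ with $p\ge 2$, then $k\uparrow\uparrow m\le\uu{(pm)}$ for all $m\ge 1$. This is proved by induction on $m$, the inductive step reading
$$k\uparrow\uparrow(m+1)=k^{\,k\uparrow\uparrow m}\le(\uu p)^{\uu{(pm)}}\le 2^{(\uu{(pm)})^{2}}\le 2^{\uu{(pm+1)}}=\uu{(pm+2)}\le\uu{(p(m+1))},$$
where the last step uses $p\ge 2$. Applying this with $k=K\le\uu 6$, $p=6$, $m=856$ gives $K\uparrow\uparrow 856\le\uu{5136}$. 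Combining the estimates, $\text{Cub}(3,2,\text{TTT}(2,2,6))<428\cdot\uu{5136}<(\uu{5136})^{2}\le\uu{5137}$, which is exactly the claim (and explains the constant $5137=6\cdot 856+1$).

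The only genuinely delicate point is the collapsing inequality together with the size bookkeeping around it: one must verify that bounding $3^{428}$ by $\uu 5$ (rather than something coarser) is what holds the overhead per tower level down to the factor $p=6$, and that $p\ge 2$ is precisely the slack needed for the induction to close. Everything else — checking the hypothesis of Lemma 2, the monotonicity used to insert $l=428$, and absorbing the leading factor $428$ into one final tower level via $(\uu q)^2\le\uu{(q+1)}$ — is routine.
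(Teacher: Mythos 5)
Your proof is correct and takes essentially the same route as the paper: Lemma 1 combined with $\text{TTT}(2,2,6)\le 428$, then Lemma 2, the base estimate $2^{3^{428}}<\uu{6}$, and a collapse of the $856$-fold tetration at a cost of $6$ tower levels each, arriving at $\uu{(6\cdot 856+1)}=\uu{5137}$. Your explicit inductive collapsing inequality and the absorption of the factor $428$ via $\left(\uu{5136}\right)^2\le\uu{5137}$ simply make rigorous the steps the paper performs implicitly (there the factor $428<2^9$ is merged into the final tower count).
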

\begin{proof}
\begin{align*}
    \text{Cub}\left(3,2,\text{TTT}\left(2, 2, 6\right)\right) &\le \text{TTT}\left(2, 2, 6\right)\cdot f\left(\text{TTT}\left(2, 2, 6\right),2^{3^{\text{TTT}\left(2, 2, 6\right)}}\right)\\
    &\le 428\cdot f\left(428,2^{3^{428}}\right)
    \le 428\cdot\left(2^{3^{428}} \uparrow \uparrow 856\right)\\
    &< 2^9 \cdot \left(\left(\uu{6}\right) \uparrow \uparrow 856\right)
    < \uu{\left(6\cdot 856 +1\right)}
    = \uu{5137}. 
\end{align*}
\end{proof}

\begin{lem}
$\text{TTT}\left(4, 2, 6\right) + 1 < \uu{\left(\uu{5138}\right)}$.
\end{lem}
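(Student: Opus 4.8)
The plan is to unfold the recursion of the preceding lemma exactly once and then reduce everything to the two \text{Cub}-estimates we already control. Taking $n=2$, $c=2$, $d=6$ in the inequality $\text{TTT}\left(n+2,c,d\right)\le\text{Cub}\left(n+2,c,\text{Cub}\left(n+1,c,\text{TTT}\left(n,c,d\right)\right)\right)$ gives
$$\text{TTT}\left(4,2,6\right)\le\text{Cub}\left(4,2,\text{Cub}\left(3,2,\text{TTT}\left(2,2,6\right)\right)\right).$$
I would then write $M=\text{Cub}\left(3,2,\text{TTT}\left(2,2,6\right)\right)$, so that by the previous lemma $M<\uu{5137}$, and the whole problem collapses to bounding $\text{Cub}\left(4,2,M\right)$.

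Second, I would feed $M$ into the explicit estimate $\text{Cub}\left(n,c,l\right)\le l\cdot f\left(l,c^{n^l}\right)$ with $n=4$, $c=2$, $l=M$, obtaining $\text{Cub}\left(4,2,M\right)\le M\cdot f\left(M,2^{4^M}\right)$. Since plainly $2M<2^{4^M}$, the hypothesis of the growth estimate $f\left(l,k\right)<k\uparrow\uparrow 2l$ is met with $k=2^{4^M}$, which turns the bound into $\text{Cub}\left(4,2,M\right)\le M\cdot\left(2^{4^M}\uparrow\uparrow 2M\right)$.

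Third comes the purely arithmetic step, where the care is required. Writing $4^M=2^{2M}$, so that $2^{4^M}=2^{2^{2M}}$, and using $M<\uu{5137}$, I would check the heights one level at a time: $2M<\uu{5138}$, then $4^M=2^{2M}<\uu{5139}$, and finally $2^{4^M}<\uu{5140}$, i.e. the base of the inner tower is itself a tower of height $5140$. Then, exactly as in the proof of the previous lemma, I collapse the double tower through $\left(\uu{p}\right)\uparrow\uparrow q<\uu{\left(pq+1\right)}$ (valid for $p\ge 2$), here with $p=5140$ and $q=2M$, giving $2^{4^M}\uparrow\uparrow 2M<\uu{\left(10280M+1\right)}$. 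Absorbing the leading factor $M<\uu{\left(10280M+1\right)}$ and the final $+1$ each costs only one extra level via $\left(\uu{r}\right)^2<\uu{\left(r+1\right)}$, so $\text{TTT}\left(4,2,6\right)+1<\uu{\left(10280M+3\right)}$.

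Finally I would verify that the accumulated height stays strictly below the target. Since $M<\uu{5137}$ and $10280<2^{14}$, we get $10280M+3<2^{14}\cdot\uu{5137}=2^{14+2\uparrow\uparrow 5136}<2^{2\uparrow\uparrow 5137}=\uu{5138}$, and monotonicity of $\uu{\cdot}$ yields $\text{TTT}\left(4,2,6\right)+1<\uu{\left(\uu{5138}\right)}$. The main obstacle is the bookkeeping in the third step: each multiplication, exponentiation, and tower-collapse must be charged to a precise increment of the tower height, so that the total offset lands below $5138$ rather than overshooting it; everything else is a direct substitution into results already established above.
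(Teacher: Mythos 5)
Your proof is correct and takes essentially the same route as the paper's: the same chain $\text{TTT}\left(4,2,6\right)\le\text{Cub}\left(4,2,\text{Cub}\left(3,2,\text{TTT}\left(2,2,6\right)\right)\right)$, the same bounds $\text{Cub}\left(n,c,l\right)\le l\cdot f\left(l,c^{n^l}\right)$ and $f\left(l,k\right)<k\uparrow\uparrow 2l$, and the same tower arithmetic, with your final exponent $10280M+3$ playing the role of the paper's $10280\cdot\left(\uu{5137}\right)+1$, both safely below $\uu{5138}$. Your explicit checks of the hypothesis $2l<k$ and of the collapse rule $\left(\uu{p}\right)\uparrow\uparrow q<\uu{\left(pq+1\right)}$ merely make precise steps the paper leaves implicit.
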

\begin{proof}
\begin{align*}
\text{TTT}\left(4, 2, 6\right)+1 &\le 1+\text{Cub}\left(4, 2, \text{Cub}\left(3, 2, \text{TTT}\left(2, 2, 6\right)\right)\right)\\
&\le 1+\text{Cub}\left(4, 2, \uu{5137} \right)\\
&\le 1+\left(\uu{5137}\right) \cdot f\left(\uu{5137},2^{4^{\uu{5137}}}\right) \\
&\le 1+\left(\uu{5137}\right) \cdot \left( \left(2^{4^{\uu{5137}}}\right) \uparrow \uparrow \left(2\cdot \uu{5137}\right) \right)\\
&< \left(\uu{5138}\right) \cdot \left( \left(\uu{5140}\right) \uparrow \uparrow \left(2\cdot \uu{5137}\right) \right)\\
&< \left(\uu{5138}\right) \cdot \left( \uu{\left(5140\cdot 2\cdot \uu{5137}\right)} \right)\\
&< \uu{\left(10280 \cdot \uu{5137} +1\right)}\\
&< \uu{\left(\uu{5138}\right)}.
\end{align*}
\end{proof}

\begin{thm}
$\text{Graham}\left(2\right) < 2\uparrow\uparrow\uparrow 5$.
\end{thm}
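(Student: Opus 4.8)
The plan is to chain together the reduction and the bounds from the previous lemmas and then compare the resulting tower against $2\uparrow\uparrow\uparrow 5$. First I would recall the reduction from \cite{ML} recorded in Section 1, namely $\text{Graham}\left(2\right) \le \text{TTT}\left(4,2,6\right)+1$, which converts the geometric question into a bound on a Tic-Tac-Toe number. The preceding lemma then supplies exactly what is needed: $\text{TTT}\left(4,2,6\right)+1 < \uu{\left(\uu{5138}\right)}$. Hence $\text{Graham}\left(2\right) < \uu{\left(\uu{5138}\right)}$, and it only remains to verify that this double tower is dominated by $2\uparrow\uparrow\uparrow 5$.

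The second step is to unwind the definition of the triple arrow. Computing from the bottom, $2\uparrow\uparrow\uparrow 3 = \uu{4} = 2^{2^{2^2}} = 65536$, so that $2\uparrow\uparrow\uparrow 4 = \uu{65536}$ and therefore $2\uparrow\uparrow\uparrow 5 = \uu{\left(\uu{65536}\right)}$. Thus the target quantity is precisely an iterated $\uparrow\uparrow$-tower whose inner height equals $65536$, and the whole theorem reduces to comparing the heights $5138$ and $65536$.

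The final step is that comparison, carried out using monotonicity of the tower function. Since $5138 < 65536$ and $m \mapsto \uu{m}$ is strictly increasing, we obtain $\uu{5138} < \uu{65536}$, and applying the same monotonicity once more gives $\uu{\left(\uu{5138}\right)} < \uu{\left(\uu{65536}\right)} = 2\uparrow\uparrow\uparrow 5$. Combining this with the bound $\text{Graham}\left(2\right) < \uu{\left(\uu{5138}\right)}$ from the previous step finishes the argument.

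I expect essentially no serious obstacle here: all of the genuine difficulty has already been absorbed into the earlier lemmas and the cited reduction to the Tic-Tac-Toe number, so the present proof is pure bookkeeping of tower heights. The only points demanding care are unwinding $2\uparrow\uparrow\uparrow 5$ to the correct inner height and confirming that the numerical gap between $5138$ and $65536$ leaves comfortable room, which it clearly does.
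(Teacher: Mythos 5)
Your proof is correct and takes essentially the same route as the paper: both combine the reduction $\text{Graham}\left(2\right)\le \text{TTT}\left(4,2,6\right)+1$ from the cited work with the preceding lemma's bound $\text{TTT}\left(4,2,6\right)+1 < \uu{\left(\uu{5138}\right)}$, and then observe that this is below $2\uparrow\uparrow\uparrow 5$. The only difference is that the paper asserts the final inequality without comment, while you correctly unwind $2\uparrow\uparrow\uparrow 5 = \uu{\left(\uu{65536}\right)}$ and conclude by monotonicity from $5138 < 65536$, which is a harmless (indeed helpful) elaboration.
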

\begin{proof}
From \cite{ML} we know, that $\text{Graham}\left(2\right)\le \text{TTT}\left(4, 2, 6\right) +1$, so $\text{Graham}\left(2\right) < \uu{\left(\uu{5138}\right)} < 2\uparrow\uparrow\uparrow 5$.
\end{proof}

\end{document}